\title[The Complexity of Arc-Connectedness Relation in the Plane]{The Complexity of Arc-Connectedness Relation in the Plane}
\author{YUSUF UYAR}
\address{Department of Mathematics, Middle East Technical University, 06800, Ankara, Turkey.}
\email{yuyar@metu.edu.tr}
\subjclass[2020]{54H05, 03E15.}
\keywords{Borel equivalence relations, arc-connectedness, the real plane}
\newtheorem{theorem}{Theorem}
\newtheorem{lemma}[theorem]{Lemma}
\newtheorem{proposition}[theorem]{Proposition}
\newcommand{\N}{\mathbb{N}}
\newcommand{\I}{\mathbb{I}}
\newcommand{\R}{\mathbb{R}}
\newcommand{\Z}{\mathbb{Z}}
\newcommand{\J}{\mathcal{J}}
\begin{document}
\sloppy
\begin{abstract}
In this paper, we show that the arc-connectedness equivalence relation on a Polish subspace of the real plane is an essentially hyperfinite Borel equivalence relation. This result provides the optimal upper bound for such a Borel equivalence relation.
\end{abstract}
\maketitle
\section{Introduction}
For a Polish space $X$, the arc-connectedness equivalence relation $\approx _X$ on $X$ is an analytic relation because we have $$x\approx_Xy \text{ iff there exists } f \in C(\I,X) \text{ such that }f(0)=x \text{ and }f(1)=y$$ 
for all $x,y\in X$, where $C(\I, X)$ is the Polish space of continuous functions\footnote{ We note that the function $f$ in this definition is generally called a path rather than an arc, where the latter term is used to mean a homeomorphic image of $\I$. However, path-connectedness coincides with arc-connectedness in Hausdorff spaces (see \cite[Theorem 2.11]{illanes2025}).} from the unit interval $\I$ to $X$. 

That any analytic equivalence relation on the Cantor space $2^\N$ can be realized as the arc-connectedness equivalence relation on a compact subset of $\R^3$ immediately follows from Theorem 4.1 of \cite{becker1998}, in which Becker asked how complex the arc-connectedness equivalence relation can be for compact subsets of $\R^2$. Then Becker and Pol showed that for any Polish subspace $X\subseteq \R^2$, the relation $\approx _X$ is Borel if and only if all arc components of $X$ are Borel \cite{becker2001}.  In fact, it has been proven that this equivalence relation is Borel for any Polish subspace of $\R^2$ \cite{saintraymond2024}.

In this paper, we will show that the arc-connectedness relation on any Polish subspace of $\R^2$ is an essentially hyperfinite Borel equivalence relation, and therefore is bounded by the eventual equality relation $ E_0$ on $2^\N$. 

This upper bound turns out to be the best possible. One can actually embed the Cantor space into the plane suitably to see that $E_0$ is Borel reducible to the arc-connectedness relation on the Knaster continuum (also known as the bucket handle). Indeed, the same result is immediate from \cite{solecki2002} as composants of this continuum coincide with its arc components. The reader may consult \cite[\textsection 48, V-VI]{kuratowski1968} for the construction and related continuum-theoretic properties of the Knaster continuum.

\textbf{Related Work.} We refer the reader to \url{https://arxiv.org/abs/2601.00601}, an extended version of this document written in collaboration with additional authors, for a more comprehensive analysis of connectedness relations on Polish spaces.

\section{Borel Equivalence Relations}

In this section, we recall the necessary definitions and results from the theory of Borel equivalence relations. Our reference for this section is \cite{dougherty1994}. 

Let $X$ be a standard Borel space, and let $E\subseteq X\times X$ be a Borel equivalence relation. Then $E$ is called \textit{finite} (respectively \textit{countable}) if every equivalence class of it is finite (respectively countable). It is called \textit{hyperfinite} if $E=\bigcup _{n\in \N}E_n$, where $\{E_n\}_{n\in \N}$ is an increasing sequence of finite Borel equivalence relations on $X$.

Let $F$ be a Borel equivalence relation on a standard Borel space $Y$. Then $E$ is said to be \textit{Borel reducible} to $F$, written $E\leq _BF$, if there exists a Borel function $f$, called a \textit{Borel reduction}, from $X$ to $Y$ such that $xEy$ if and only if $f(x)Ff(y)$ for all $x,y\in X$. We consider $E$ not to be more complex than $F$, provided that such a Borel reduction exists.

A Borel equivalence relation is called \textit{essentially hyperfinite} if it is Borel reducible to a hyperfinite Borel equivalence relation. For any countable set $\boldsymbol{\Omega}$ with $|\boldsymbol{\Omega}|>1$, the eventual equality relation $E_0(\boldsymbol{\Omega})$ given by $$\alpha E_0(\boldsymbol{\Omega})\beta \text{ iff }\exists n\in \N\ \forall m\geq n\ \alpha _m=\beta _m$$ on the product space $\boldsymbol{\Omega}^\N$ is a universal hyperfinite Borel equivalence relation in the sense that any hyperfinite Borel equivalence relation is Borel reducible to it. By the transitivity of Borel reducibility, any essentially hyperfinite Borel equivalence relation is Borel reducible to $E_0(\boldsymbol{\Omega})$. The equivalence relation $E_0(2)$ is usually denoted by $E_0$.

We know that $E$ is hyperfinite if and only if it is the orbit equivalence of a Borel $\Z$-action on $X$. One can easily modify the proof of this fact to obtain the following slightly more general result (for example, see \cite[Theorem 6.6]{kechris2004} and the remark below it).
\begin{theorem}
\label{graph}
   Suppose that there exists a Borel graph on $X$ with degree at most $2$ such that the connected components of the graph are exactly $E$-classes. Then $E$ is hyperfinite.
\end{theorem}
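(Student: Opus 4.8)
The plan is to decompose $X$ into countably many $E$-invariant Borel pieces and treat each one separately, using that hyperfiniteness is preserved under such decompositions: if $X=\bigsqcup_k A_k$ with each $A_k$ an $E$-invariant Borel set on which $E$ is hyperfinite, then interleaving the witnessing increasing sequences of finite Borel equivalence relations shows that $E$ is hyperfinite. Here $E$ is, by hypothesis, the connectedness relation of the given graph, which is locally finite, so $E$ is a countable Borel equivalence relation and Lusin--Novikov uniformization applies to it; I will also use the standard fact that the graph distance is Borel.

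First I would dispose of the points lying in finite components. The set $X_{\mathrm{fin}}=\{x\in X:[x]_E\text{ is finite}\}$ is $E$-invariant and Borel, since for each $k$ the set $\{x:|[x]_E|\ge k\}$ is a countable union, over $k$-tuples of indices, of the Borel sets on which the corresponding Lusin--Novikov enumerating functions take pairwise distinct values. The restriction of $E$ to $X_{\mathrm{fin}}$ is a finite Borel equivalence relation, hence hyperfinite. So it suffices to show that $E$ restricted to $X_\infty:=X\setminus X_{\mathrm{fin}}$ is hyperfinite. Every component of $X_\infty$ is infinite, connected and of degree at most $2$, hence acyclic (a cycle would exhaust the degree of each of its vertices and so be a whole component), and therefore a one-ended or two-ended infinite path. (Incidentally, the one-ended components form a Borel $E$-invariant set on which $E$ is even smooth, their endpoints being a Borel transversal; so the genuine case is that of two-ended components.)

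On $X_\infty$ I would run the marker argument that underlies the fact, recalled above, that Borel $\Z$-actions induce hyperfinite equivalence relations; the only change is that a component carries no distinguished generator but does carry the graph metric, and that metric is all the argument uses. Concretely, the marker lemma yields a decreasing sequence $X_\infty=M_0\supseteq M_1\supseteq M_2\supseteq\cdots$ of Borel sets with $\bigcap_n M_n=\emptyset$ such that, if $G_n$ denotes the graph obtained from the given graph on $X_\infty$ by deleting every edge incident to a point of $M_n$, then every connected component of $G_n$ is finite. (Informally, $M_n$ meets every path-component cofinally at each end, with finite gaps between consecutive markers, and these gaps grow with $n$, so the markers both break the components into finite blocks and recede to infinity.) Let $E_n$ be the relation ``$x$ and $y$ lie in the same component of $G_n$''. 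Each $E_n$ is then a finite Borel equivalence relation contained in $E$; the $E_n$ increase, since $M_{n+1}\subseteq M_n$ forces $G_{n+1}\supseteq G_n$; and $\bigcup_n E_n$ is the restriction of $E$ to $X_\infty$, because if $x\mathrel{E}y$ then the finite set of vertices on the path from $x$ to $y$ eventually misses $M_n$, and for such $n$ no edge of that path has been deleted, so $x$ and $y$ lie in a common component of $G_n$. Hence $E$ restricted to $X_\infty$ is hyperfinite, and together with the finite part this proves the theorem.

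The step I expect to be the real obstacle is the marker lemma invoked above: producing Borel sets $M_n$ that are simultaneously nested, meet the components well enough to force each $G_n$ to have finite components, and have empty intersection. This is the technical heart of the Slaman--Steel argument, and the substance of ``modifying'' that proof is just checking that its marker construction goes through for the graph distance of an arbitrary Borel graph of degree at most $2$ in place of the coordinate of a $\Z$-action; everything else is bookkeeping.
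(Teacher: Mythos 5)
Your overall architecture (split off the finite classes, observe that the infinite components of a degree-$\le 2$ graph are rays or two-ended lines, then chop them into finite blocks by a vanishing sequence of markers) is exactly the standard route; the paper itself gives no proof of this theorem, citing the Slaman--Steel/Weiss argument as presented in Kechris--Miller and the remark following it. The routine parts of your sketch (Borelness of the set of points with finite class via Lusin--Novikov, the invariant-decomposition interleaving, the verification that the $E_n$ increase and exhaust $E$ once the markers are as you describe) are fine.

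The genuine gap is the marker lemma in the form you invoke it. The standard marker lemma gives a decreasing sequence of Borel \emph{complete sections} $M_n$ with $\bigcap_n M_n=\emptyset$, but it gives no control whatsoever on the gaps between consecutive markers inside a component: a component may meet $M_n$ in a set that is bounded toward one (or both) of its ends, and then deleting the edges incident to $M_n$ leaves one or two \emph{infinite} blocks, so your $E_n$ need not be finite Borel equivalence relations. Thus the statement you assume -- nested, vanishing markers that simultaneously cut every component into finite pieces -- is strictly stronger than what the marker lemma provides, and it is essentially the whole content of the theorem: the known constructions of such gap-controlled vanishing markers go through (or amount to) hyperfiniteness itself. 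The actual Slaman--Steel/Weiss proof closes exactly this hole with an extra case analysis that your sketch omits: the invariant Borel set of points whose component meets some $M_n$ in a set bounded toward an end is \emph{smooth} (the extremal marker of the component is a Borel selector), hence hyperfinite, while on the complementary invariant set every $M_n$ is cofinal toward both ends and your cutting argument applies. So the fix is not ``checking that the marker construction goes through for the graph distance'' -- the construction goes through but does not yield what you need -- rather you must either add this smooth-part argument or do genuine additional work to produce nested vanishing markers with finite gaps (for instance, taking $M_n$ to be the set of boundary points of the $E_n$-classes of an already-given hyperfinite exhaustion, which is of course circular here).
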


We recall that a subset $A\subseteq X$ satisfying $$A=\bigcup _{x\in A}[x]_E$$ is said to be $E$-\textit{invariant}. The following lemma helps us to consider different parts of arc components separately.

\begin{lemma}
\label{lemma2}
   Let $\{B_n:n\in \N\}$ be a countable collection of $E$-invariant Borel subsets of $X$ whose union is $X$. If each $E|_{B_n}$ is essentially hyperfinite, then so is $E$.
\end{lemma}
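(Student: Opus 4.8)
The plan is to reduce to the case where the cover is disjoint and then invoke that hyperfiniteness is preserved under countable disjoint sums. First I would disjointify: put $C_0=B_0$ and $C_n=B_n\setminus\bigcup_{k<n}B_k$ for $n\geq 1$. Then the $C_n$ are pairwise disjoint Borel sets with $\bigcup_{n}C_n=X$, and each $C_n$ is $E$-invariant, since a Boolean combination of $E$-invariant sets is $E$-invariant. Moreover $E|_{C_n}$ is the restriction of $E|_{B_n}$ to the Borel subset $C_n\subseteq B_n$, so composing a Borel reduction of $E|_{B_n}$ to a hyperfinite relation with the inclusion $C_n\hookrightarrow B_n$ shows that $E|_{C_n}$ is again essentially hyperfinite. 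Since any essentially hyperfinite relation is Borel reducible to $E_0$, fix for each $n$ a Borel reduction $f_n\colon C_n\to 2^\N$ of $E|_{C_n}$ to $E_0$.

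Next I would patch these together. Define $f\colon X\to\N\times 2^\N$ by $f(x)=(n,f_n(x))$, where $n$ is the unique index with $x\in C_n$; this is Borel because it is Borel on each piece of a Borel partition of $X$. Let $F$ be the equivalence relation on $\N\times 2^\N$ given by $(n,\alpha)F(m,\beta)$ iff $n=m$ and $\alpha E_0\beta$. Using that the $C_n$ partition $X$ into $E$-invariant pieces, one checks directly that $xEy$ iff $f(x)Ff(y)$: if $xEy$ then $x$ and $y$ lie in a common $C_n$ by invariance and then $f_n(x)E_0f_n(y)$; conversely, if $f(x)Ff(y)$ then $x$ and $y$ have the same first coordinate $n$, so $x,y\in C_n$, and $f_n(x)E_0f_n(y)$ yields $xEy$. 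Hence $E\leq_B F$.

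It remains to see that $F$ is hyperfinite, and here I would apply Theorem~\ref{graph}. On each slice $\{n\}\times 2^\N$ put the graph obtained by transporting to it a fixed Borel graph of degree at most $2$ on $2^\N$ whose connected components are the $E_0$-classes (for instance, the graph of a Borel $\Z$-action generating $E_0$); the union of these graphs over $n\in\N$ is a Borel graph on $\N\times 2^\N$ of degree at most $2$ whose connected components are exactly the $F$-classes, so Theorem~\ref{graph} gives that $F$ is hyperfinite. Consequently $E$ is essentially hyperfinite.

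The one step deserving real care is the passage to a disjoint cover, together with the remark that the restriction of an essentially hyperfinite relation to an invariant Borel set stays essentially hyperfinite; once the $C_n$ are in hand, the remainder is bookkeeping and an appeal to Theorem~\ref{graph} (equivalently, to the closure of hyperfiniteness under countable disjoint sums, which one could also prove by hand by writing each target $F_n=\bigcup_m F_{n,m}$ as an increasing union of finite Borel equivalence relations and taking $G_m$ to agree with $F_{n,m}$ on the $n$-th slice for $n\leq m$ and with the diagonal for $n>m$).
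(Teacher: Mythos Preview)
Your proof is correct and follows essentially the same approach as the paper: both disjointify the cover into $E$-invariant Borel pieces and then glue the individual reductions together. The only cosmetic difference is that the paper encodes the piece index directly into the target sequence to land in $E_0(\N)$, whereas you keep the index as a separate coordinate and then separately verify that the resulting product relation on $\N\times 2^\N$ is hyperfinite.
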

\begin{proof}
    We set $A_0=B_0$ and recursively define $A_n=B_n\setminus \bigcup _{m<n}B_m$ for all $n\geq 1$. One can check by induction that $\{A_n\}_{n\in \N}$ is an $E$-invariant Borel partition of $X$.
    
    For each $n\in \N$, let $f_n:B_n\rightarrow \N^\N$ be a Borel reduction from $E|_{B_n}$ to $E_0(\N)$. Then the map $f:X\rightarrow \N^\N$ defined by $$f(x)=\left( f_k(x)_0,k,f_k(x)_1,k,f_k(x)_2,k,... \right)$$ where $x\in A_k$ is a Borel map because each $f|_{A_n}$ is Borel. Moreover, it is also a Borel reduction from $E$ to $E_0(\boldsymbol{\N})$, showing that $E$ is essentially hyperfinite.
\end{proof}

\section{Upper Bound}
In this section, we adopt some definitions, notations, and well-known facts from \cite{saintraymond2024} to prove the upper bound. Let $H\subseteq \R^2$ be a Polish subspace. We endow the set $\mathcal{K}(H)$ of all compact subsets of $H$ with the Vietoris topology, which is a Polish space. The set $\J(H)\subseteq \mathcal{K}(H)$ of all arcs in $H$ and the map $e:\J(H)\rightarrow \mathcal{K}(H)$ that assigns to each arc the set of its endpoints are Borel (see \cite{becker2001}). We define $e_1(J)=\min \left(e(J)\right)$ and $e_2(J)=\max \left(e(J)\right)$ for all $J\in \J(H)$ with respect to the lexicographic order on $\R^2$, which is Borel. The equivalence relation $\approx _{\J(H)}$ defined on $\J(H)$ of being in the same arc component is given by $$\{(I,J)\in \J(H)\times \J(H):e_1(I)\approx _He_1(J)\}$$ and so it is Borel. 

For any $A\subseteq H$, we denote the restriction ${\approx _H}|_A$ by $\approx _ A$. Likewise, ${\approx _{\J(H)}}|_{\mathcal{A}}$ will be denoted by $\approx _{\mathcal{A}}$ for any $\mathcal{A}\subseteq \J(H)$.

Let $C_x$ denote the arc component of $x\in H$. The partition
\begin{align*}
H_0=&\{x\in H: C_x\text{ is a continuous injective image of a real interval}\}\\
H_1=&\{ x\in H: C_x\text{ contains a simple triod} \}\\
H_2=& \{x\in H: C_x\text{ is a singleton}\}
\end{align*}
of the set $H$ was proven to be a Borel partition \cite{saintraymond2024}. 

We shall work with non-triodic, non-degenerate arc components. Fix two rational numbers $p<q$. Let $L,R$ be the vertical lines $x=p$ and $x=q$, respectively. We define the Borel set $$\J_{p,q}=\{J\in \J(H):J\cap L=\{e_1(J)\},\ J\cap R=\{e_2(J)\},\ e_1(J)\in H_0\}$$ of all arcs in $H_0$ that intersect the lines $L$ and $R$ at their endpoints only. We will show that $\J_{p,q}$ "chooses a chain of arcs" from the corresponding arc components of $H_0$.

\textbf{Observations.} Let $C$ be an arc component in $H_0$, and let $\J_{p,q}|_C$ denote elements of $\J_{p,q}$ that lie in $C$. Say $\varphi:K\rightarrow C$ is a continuous injective parametrization where $K\subseteq \R$ is an interval.

It is easy to show that $\varphi^{-1}( J)\subseteq K$ is a closed interval for any arc $J\in \J_{p,q}|_C$\footnote{with the only exception that if $C$ is a simple closed curve, then the inverse image of only one element of $\J_{p,q}|_C$ may violate this, in which case the subsequent observations and propositions can be easily modified}. Any distinct arcs $J,J'\in \J_{p,q}|_C$ can intersect only at their endpoints, and therefore $\varphi^{-1}(J)$ and $\varphi^{-1}(J')$ are two intervals that possibly intersect at one of their endpoints (see \cite[Lemma 7]{saintraymond2024}). This allows us to linearly order the elements of $\J _{p,q}|_C$. The following proposition shows that this ordering is, in fact, "chain-like".

\begin{proposition}
\label{proposition3}
For any distinct $J,J'\in \J_{p,q}|_C$, there can be only finitely many elements of $\{ \varphi^{-1}\left( I \right):I\in \J_{p,q} |_C\}$ between $\varphi ^{-1}( J)$ and $\varphi ^{-1}(J')$.
\end{proposition}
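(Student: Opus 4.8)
The plan is to argue by contradiction. Suppose there are infinitely many pairwise distinct arcs $I_n\in \J_{p,q}|_C$, $n\in \N$, whose preimages $\varphi^{-1}(I_n)$ all lie strictly between $\varphi^{-1}(J)$ and $\varphi^{-1}(J')$ in the linear order described in the Observations. First I would record the structural facts that will be used. By the Observations each $\varphi^{-1}(I_n)$ is a closed interval; since $I_n$ is a non-degenerate arc and $\varphi$ is continuous and injective, $\varphi|_{\varphi^{-1}(I_n)}$ is a continuous injection onto a compact set, so $\varphi^{-1}(I_n)$ is in fact a non-degenerate compact interval, say $\varphi^{-1}(I_n)=[a_n,b_n]$ with $a_n<b_n$. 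Writing $\varphi^{-1}(J)=[a,b]$ and $\varphi^{-1}(J')=[a',b']$ with (without loss of generality) $b\le a'$, the assumption that $\varphi^{-1}(I_n)$ lies between them gives $b\le a_n$ and $b_n\le a'$, so every $[a_n,b_n]$ is contained in the compact interval $[b,a']\subseteq K$. Moreover, distinct arcs in $\J_{p,q}|_C$ meet only at their endpoints, so the intervals $[a_n,b_n]$ have pairwise disjoint interiors.

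Next I would extract convergence. Since the open intervals $(a_n,b_n)$ are pairwise disjoint and contained in $[b,a']$, we have $\sum_n (b_n-a_n)\le a'-b<\infty$, hence $b_n-a_n\to 0$ along a subsequence; passing to it, I may assume $b_n-a_n\to 0$. The left endpoints $a_n$ lie in the compact set $[b,a']$, so after a further subsequence $a_n\to t^{\ast}$ for some $t^{\ast}\in [b,a']$, and then $b_n=a_n+(b_n-a_n)\to t^{\ast}$ as well. By continuity of $\varphi$ we get $\varphi(a_n)\to \varphi(t^{\ast})$ and $\varphi(b_n)\to \varphi(t^{\ast})$.

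Finally I would invoke the geometry built into the definition of $\J_{p,q}$. For each $n$ the points $\varphi(a_n),\varphi(b_n)$ are precisely the two endpoints $e_1(I_n),e_2(I_n)$ of $I_n$, and since $I_n\cap L=\{e_1(I_n)\}$, $I_n\cap R=\{e_2(I_n)\}$ with $L\cap R=\varnothing$, one of $\varphi(a_n),\varphi(b_n)$ lies on $L$ and the other on $R$. Passing to one more subsequence, I may assume that $\varphi(a_n)\in L$ for all $n$; then the first coordinate of $\varphi(a_n)$ is $p$ and the first coordinate of $\varphi(b_n)$ is $q$ for every $n$. Letting $n\to\infty$ and using continuity of the first-coordinate projection, the first coordinate of $\varphi(t^{\ast})$ equals both $p$ and $q$, contradicting $p<q$; this yields the proposition. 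The crux of the argument — and the only point requiring care — is this last observation: an accumulation among the preimage intervals would force \emph{both} endpoints of the corresponding arcs to converge to the single point $\varphi(t^{\ast})$, which is impossible because those endpoints are pinned to the two disjoint vertical lines $L$ and $R$. (If $C$ is a simple closed curve, at most one preimage may fail to be an interval, and one discards that single arc and runs the same argument, exactly as indicated in the footnote to the Observations.)
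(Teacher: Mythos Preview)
Your argument is correct and follows essentially the same route as the paper: assume infinitely many preimage intervals in $[b,a']$, show their endpoints both converge to a single point of $K$, and derive a contradiction from the fact that the images of those endpoints are pinned to the disjoint lines $L$ and $R$. The only cosmetic difference is that the paper extracts the common limit by first passing to a monotone subsequence of intervals, whereas you use the disjoint-interiors/total-length bound to force $b_n-a_n\to 0$ before applying compactness; both are equally valid.
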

\begin{proof} Let  $J,J'\in \J_{p,q}|_C$ be distinct arcs. Say $[a,b]=\varphi ^{-1}( J)$ and $[a',b']=\varphi ^{-1}(J')$. Assume without loss of generality that $b\leq a'$. If $b=a'$, then we are done. So, suppose that $b<a'$. Assume to the contrary that the set $$\mathcal{T}=\{\varphi ^{-1}(I)\subseteq [b,a']:I\in \J_{p,q}|_C\}$$ is infinite. Then let $\{[x_n,y_n]\}_{n\in \N}$ be a decreasing (or increasing) sequence of intervals in $\mathcal{T}$. Set $L_H=H\cap L$ and $R_H=R\cap H$.
     
The sequence $\{x_n\}_{n\in \N}$ has distinct elements and is a subset of $\varphi^{-1}(L_H\cup R_H)$. Without loss of generality, we may assume that $\{x_n\}_{n\in \N}\cap \varphi^{-1}(L_H)$ is infinite. Say $\{x_{k_n}\}_{n\in \N}\subseteq \varphi^{-1}(L_H)$ where $\{x_{k_n}\}_{n\in \N}$ is a subsequence of $\{x_n\}_{n\in \N}$. Then $\{y_{k_n}\}_{n\in \N}\subseteq \varphi^{-1}(R_H)$. 

On the other hand, we have $\displaystyle \lim _{n\rightarrow \infty}x_{k_n}=\lim _{n\rightarrow \infty}y_{k_n}$ because $\{[x_{k_n},y_{k_n}]\}_{n\in \N}$ is a decreasing (or increasing) sequence of intervals in $[b,a']$. It follows from the continuity of $\varphi$ that $\displaystyle \lim _{n\rightarrow \infty}\varphi(x_{k_n})=\lim _{n\rightarrow \infty}\varphi(y_{k_n})$. However, this is not possible because $\{\varphi(x_{k_n})\}_{n\in  \N}\subseteq L$ and $\{\varphi(y_{k_n})\}_{n\in  \N}\subseteq R$. Thus, $\mathcal{T}$ is finite. 
\end{proof}

Let us call $J,J'\in \J_{p,q}|_C$ \textit{consecutive} if there is no interval in $$\{\varphi^{-1}(I):I\in \J_{p,q}|_C\}$$ that lies between $\varphi^{-1}(J)$ and $\varphi^{-1}(J')$. The above proposition shows that the parametrization $\varphi$ defines a canonical graph on $\J_{p,q}|_C$ of being consecutive whose degree is at most $2$. We can define this graph on the entire set of arcs $\J_{p,q}$ without using parametrizations by characterizing the relation of being consecutive as follows.

\begin{proposition}
\label{proposition4}
The arcs $J$ and $J'$ are consecutive if and only if they intersect at an endpoint of them, or there exists an arc $\Gamma\subseteq H$ that satisfies $e(\Gamma)\subseteq e(J)\cup e(J')$ and does not intersect both $L$ and $R$.
\end{proposition}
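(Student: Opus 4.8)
The plan is to prove the biconditional by treating the two directions separately, after a reduction. We may assume $J\neq J'$, and---as the Observations already note---that $C$ is not a simple closed curve, the exceptional case requiring only cosmetic changes. I would keep the parametrization $\varphi:K\to C$ and first record the one extra ingredient needed: \emph{for any two points $x,y\in C$, the only arc in $C$ with endpoints $x,y$ is the $\varphi$-image of the closed interval spanned by $\varphi^{-1}(x)$ and $\varphi^{-1}(y)$}. This holds because $C\subseteq H_0$ contains no simple triod, hence no simple closed curve---a simple closed curve in $C$ together with an arc reaching it from another point of $C$ would produce a triod---and two distinct arcs in $C$ with the same endpoints would have a simple closed curve in their union.

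Next I would dispose of the easy case $J\cap J'\neq\emptyset$. Since distinct members of $\J_{p,q}|_C$ meet only at endpoints, $J$ and $J'$ then share an endpoint, so the right-hand condition holds; and $\varphi^{-1}(J),\varphi^{-1}(J')$ are closed intervals meeting in exactly one endpoint, so nothing lies between them and $J,J'$ are consecutive. Both sides hold, so from here on I assume $J\cap J'=\emptyset$ and write $\varphi^{-1}(J)=[a,b]$, $\varphi^{-1}(J')=[a',b']$ with $b<a'$.

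For ``consecutive $\Rightarrow$ right-hand side'', the proposed witness is $\Gamma_0:=\varphi([b,a'])$: this is an arc in $H$ with $e(\Gamma_0)=\{\varphi(b),\varphi(a')\}\subseteq e(J)\cup e(J')$, and it remains to check it does not meet both $L$ and $R$. If it did, $A:=\varphi^{-1}(L)\cap[b,a']$ and $B:=\varphi^{-1}(R)\cap[b,a']$ would be disjoint nonempty compact sets; choosing $c\in A$ and $d\in B$ at minimal distance, the open interval between $c$ and $d$ misses $A\cup B$, so $\varphi$ of the closed interval between $c$ and $d$ meets $L$ and $R$ only at its respective endpoints---hence belongs to $\J_{p,q}|_C$, is distinct from $J$ and $J'$ (its preimage is a nondegenerate subinterval of $[b,a']$), and lies between $\varphi^{-1}(J)$ and $\varphi^{-1}(J')$---contradicting consecutiveness. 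For the converse, let $\Gamma\subseteq H$ be an arc with $e(\Gamma)\subseteq e(J)\cup e(J')$ missing, say, $L$ (the case of $R$ being symmetric). Its endpoints lie in $C$, so $\Gamma\subseteq C$; they cannot both lie in $e(J)$, else by uniqueness $\Gamma$ would be the arc of $C$ spanning $e(J)$, namely $J$, which meets $L$; likewise not both in $e(J')$. So $\Gamma$ joins some $u\in e(J)$ to some $v\in e(J')$; then $\varphi^{-1}(u)\leq b$ and $\varphi^{-1}(v)\geq a'$, so $\varphi^{-1}(\Gamma)$ is the interval spanned by them and contains $[b,a']$. If some $I\in\J_{p,q}|_C\setminus\{J,J'\}$ had $\varphi^{-1}(I)\subseteq[b,a']$, then $I\subseteq\Gamma$, so $e_1(I)\in\Gamma\cap L=\emptyset$, absurd; hence $J,J'$ are consecutive.

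The step I expect to be the real obstacle is the forward direction: converting a mere topological crossing of $\Gamma_0$ over the two lines into an \emph{honest} element of $\J_{p,q}|_C$ trapped between $J$ and $J'$. This is exactly where the minimal-distance choice of a point on $L$ and a point on $R$ inside the gap, combined with the already-recorded behavior of $\varphi^{-1}$ on $\J_{p,q}|_C$, does the work. A secondary point, used in the converse, is the uniqueness of arcs inside arc components of $H_0$, which I would justify as above from the absence of triods.
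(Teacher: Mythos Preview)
Your proof is correct and follows the same route as the paper's: both directions turn on the gap arc $\varphi([b,a'])$, the forward direction offering it as the witness $\Gamma$ and the converse observing that any witness must contain it. You are simply more thorough than the paper---it asserts that $\varphi([b,a'])$ is the desired $\Gamma$ without checking that it misses one of $L,R$ (this is exactly your minimal-distance extraction), and it writes $[b,a']\subseteq\varphi^{-1}(\Gamma)$ without your uniqueness-of-arcs justification for why the endpoints of $\Gamma$ must straddle the gap.
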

\begin{proof}
    $(\rightarrow)$ Assume that $J$ and $J'$ are disjoint consecutive arcs. Then $\varphi([b,a'])$ is the desired arc $\Gamma$. $(\leftarrow)$ If $J$ and $J'$ intersect at an endpoint, then we are done. So assume that there exists an arc $\Gamma\subseteq H$ that does not intersect both $L$ and $R$ such that $e(\Gamma)\subseteq e(J)\cup e(J')$. But then $[b,a']\subseteq \varphi ^{-1}(\Gamma)$ and so $\varphi([b,a'])$ cannot intersect both $L$ and $R$. So, there is no element of $\{\varphi^{-1}(I):I\in \J_{p,q}|_C\}$ between $\varphi^{-1}(J)$ and $\varphi^{-1}(J')$.
\end{proof}

Using the proposition above, we define pairs of disjoint and intersecting consecutive arcs of $\J_{p,q}$ separately as follows. First, consider the Borel set $$\{ (J,J',\Gamma)\in \J_{p,q}^2\times \J(H):e(\Gamma)\subseteq e(J)\cup e(J')\text{ and } [\Gamma\cap L= \emptyset \text{ or } \Gamma\cap R= \emptyset]\}$$
whose projection to $\J_{p,q}^2$ is the set, call $\mathcal{D}_1$, of pairs of disjoint consecutive arcs. Each $(J,J')$-section of the Borel set is finite because $\Gamma$ is determined by the endpoints of $J$ and $J'$. It follows from the Lusin-Novikov Theorem \cite[Theorem 18.10]{kechris1995} that $\mathcal{D}_1$ is Borel. Second, we consider the Borel set $$\mathcal{D}_2=\{ (J,J')\in \J_{p,q}^2:J\neq J' \text{ and }[e_1(J)=e_1(J') \text{ or }e_2(J)=e_2(J')]\}$$ of all consecutive intersecting arcs. Then Propositions \ref{proposition3} and \ref{proposition4} together imply that $\mathcal{D}=\mathcal{D}_1\cup \mathcal{D}_2$ is a Borel graph on $\J_{p,q}$ of degree at most $2$ whose connected components are exactly $\approx _{\J_{p,q}}$-classes.

We define the horizontal analogue of $\J_{p,q}$ as follows. We set $\mathcal{I}_{p,q}$ to be the set of arcs $J$ contained in $H_0$ such that the intersection of $J$ with the horizontal line $y=p$ is the singleton containing an endpoint of $J$ and the intersection of $J$ with the line $y=q$ is the singleton containing the other endpoint.

\begin{lemma}
\label{lemma5}
    $\approx _{\J_{p,q}}$ and  $\approx _{\mathcal{I}_{p,q}}$ are hyperfinite.
\end{lemma}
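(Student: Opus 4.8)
The plan is to observe that almost all the work has been done already: for $\approx_{\J_{p,q}}$ I would simply feed the graph $\mathcal{D}$ into Theorem \ref{graph}, and for $\approx_{\mathcal{I}_{p,q}}$ I would transport that conclusion along the coordinate-swapping homeomorphism of the plane.

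For the first relation, we have already seen that $\mathcal{D}=\mathcal{D}_1\cup\mathcal{D}_2$ is Borel, so it remains only to verify the two hypotheses of Theorem \ref{graph}. To see that $\mathcal{D}$ has degree at most $2$, work inside a single arc component $C$ of $H_0$ with a continuous injective parametrization $\varphi\colon K\to C$: each $J\in\J_{p,q}|_C$ has $\varphi^{-1}(J)$ a closed interval, and a $\mathcal{D}$-neighbour of $J$ is precisely an arc whose $\varphi$-preimage is the interval of $\{\varphi^{-1}(I):I\in\J_{p,q}|_C\}$ lying immediately to the left, or immediately to the right, of $\varphi^{-1}(J)$; by Proposition \ref{proposition3} there is at most one such interval on each side, giving at most two neighbours. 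To see that the connected components of $\mathcal{D}$ are exactly the $\approx_{\J_{p,q}}$-classes, note that these classes are precisely the nonempty sets $\J_{p,q}|_C$; and given distinct $J,J'\in\J_{p,q}|_C$, Proposition \ref{proposition3} says only finitely many intervals of $\{\varphi^{-1}(I):I\in\J_{p,q}|_C\}$ lie between $\varphi^{-1}(J)$ and $\varphi^{-1}(J')$, so listing the corresponding arcs in order produces a $\mathcal{D}$-path from $J$ to $J'$, while arcs belonging to different sets $\J_{p,q}|_C$ are never $\mathcal{D}$-adjacent. (When $C$ is a simple closed curve one uses the remark in the earlier footnote; the consecutiveness graph of a cyclically ordered family is still connected.) Theorem \ref{graph} then yields that $\approx_{\J_{p,q}}$ is hyperfinite.

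For $\approx_{\mathcal{I}_{p,q}}$, let $\sigma\colon\R^2\to\R^2$ be the homeomorphism $\sigma(x,y)=(y,x)$ and set $H'=\sigma[H]$, a Polish subspace of $\R^2$. Then $\sigma$ induces a homeomorphism $\mathcal{K}(H)\to\mathcal{K}(H')$ restricting to a homeomorphism $\J(H)\to\J(H')$ that carries arc components onto arc components and sends $\mathcal{I}_{p,q}$ (defined via the horizontal lines $y=p$ and $y=q$) bijectively onto $\J_{p,q}$ computed inside $H'$ from the vertical lines $x=p$ and $x=q$; the labelling of endpoints via the lexicographic order plays no role here, since $\approx_{\J_{p,q}}$ only records lying in a common arc component. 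Hence $\approx_{\mathcal{I}_{p,q}}$ is Borel isomorphic to $\approx_{\J_{p,q}}$ for $H'$, which is hyperfinite by the previous paragraph, and hyperfiniteness is invariant under Borel isomorphism. (Equivalently, one could rerun the whole construction with horizontal lines, since Propositions \ref{proposition3} and \ref{proposition4} hold verbatim.) The only genuinely delicate point, and where I would concentrate the care, is checking that the $\mathcal{D}$-components are not strictly finer than the $\approx_{\J_{p,q}}$-classes, i.e.\ that consecutiveness alone chains together an entire family $\J_{p,q}|_C$; this rests squarely on the discreteness furnished by Proposition \ref{proposition3}, together with the footnote's handling of the simple closed curve case.
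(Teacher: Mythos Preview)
Your proposal is correct and follows essentially the same route as the paper: the paper's proof simply notes that the previously constructed Borel graph $\mathcal{D}$ satisfies the hypotheses of Theorem~\ref{graph} (which is exactly what you spell out via Propositions~\ref{proposition3} and~\ref{proposition4}), and then declares the $\mathcal{I}_{p,q}$ case ``symmetric,'' which your coordinate-swap argument makes explicit. Your elaboration of why the $\mathcal{D}$-components coincide with the $\approx_{\J_{p,q}}$-classes is precisely the content of the sentence preceding the lemma in the paper.
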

\begin{proof}
We observed that the set $\mathcal{D}$ satisfies the hypothesis of Theorem \ref{graph} for $\approx _{\J_{p,q}}$. So, $\approx _{\J_{p,q}}$ is hyperfinite. The proof that $\approx _{\mathcal{I}_{p,q}}$ is hyperfinite is symmetric.
 \end{proof}

It is a direct consequence of the following theorem that the arc-connectedness relation on a Polish subspace of $\R^2$ can be no more complex than $E_0$.

\begin{theorem}
    Let $H\subseteq \R^2$ be a Polish subspace. Then $\approx _H$ is essentially hyperfinite. 
\end{theorem}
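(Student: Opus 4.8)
The plan is to reduce $\approx_H$ to a hyperfinite equivalence relation by working separately on the three pieces $H_0$, $H_1$, $H_2$ of the Borel partition and invoking Lemma \ref{lemma2}. Since the partition consists of $\approx_H$-invariant Borel sets, it suffices to show that each $\approx_{H_i}$ is essentially hyperfinite. The set $H_2$ is trivial: every arc component there is a singleton, so $\approx_{H_2}$ is the equality relation, which is smooth and hence essentially hyperfinite. The substantive work is in $H_0$ and $H_1$.

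For $H_0$, I would cover $H_0$ by countably many $\approx_H$-invariant Borel sets on each of which the restricted relation is hyperfinite, then apply Lemma \ref{lemma2} again. Enumerate pairs of rationals $p<q$. For a point $x\in H_0$ whose arc component $C_x$ is non-degenerate and not a simple closed curve, $C_x$ is a continuous injective image of a real interval, so by taking $p,q$ so that the parametrizing line segments $L,R$ (or the horizontal lines $y=p,y=q$) both genuinely cross $C_x$, one can find an arc $J\in\J_{p,q}|_{C_x}$ (or $J\in\mathcal I_{p,q}|_{C_x}$) inside it; conversely, having such an arc pins down the component. Thus the $\approx_H$-saturation of the set of $e_1(J)$ for $J$ ranging over $\J_{p,q}$ (respectively $\mathcal I_{p,q}$), as $(p,q)$ ranges over all rational pairs, covers every non-degenerate, non-circle component in $H_0$; the map $J\mapsto e_1(J)$ transfers the hyperfiniteness of $\approx_{\J_{p,q}}$ and $\approx_{\mathcal I_{p,q}}$ from Lemma \ref{lemma5} to a Borel reduction witnessing that $\approx_H$ restricted to each such saturated piece is essentially hyperfinite. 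The remaining components in $H_0$ — simple closed curves, together with whatever degenerate leftovers are not already singletons — form an $\approx_H$-invariant Borel set; a simple closed curve is arc-connected, so that relation has all classes equal to a single component and, being a closed-curve component, is handled by a minor variant of the $\J_{p,q}$ construction (the footnote already flags the needed modification). By Lemma \ref{lemma2}, $\approx_{H_0}$ is essentially hyperfinite.

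For $H_1$, every arc component contains a simple triod, and the strategy is to Borel-reduce $\approx_{H_1}$ to the equality relation on the space of triods modulo a countable equivalence, exploiting that a component can contain only countably many pairwise disjoint triods; more directly, one picks out a canonical triod in each component via a Borel selection. Concretely, using that $\J(H)$ and $e$ are Borel, the set of triods in $H$ is Borel, and one can linearly order the triods sharing a component and build a degree-$\le 2$ Borel graph on them whose components are the $\approx$-classes, exactly as in Lemma \ref{lemma5} but with triods in place of the arcs in $\J_{p,q}$; then Theorem \ref{graph} applies and $J\mapsto(\text{a point of }J)$ reduces $\approx_{H_1}$ to this hyperfinite relation.

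The main obstacle is the $H_1$ case: unlike $H_0$, a triodic component has no linear parametrization, so producing a Borel, degree-bounded graph (or a Borel transversal) on the triods of $H$ whose connectedness matches arc-equivalence requires a genuinely different combinatorial argument — one must show that between any two "consecutive" triods in a component only finitely many others intervene, the triodic analogue of Proposition \ref{proposition3}, and that "consecutive" admits a parametrization-free Borel characterization like Proposition \ref{proposition4}. I expect this to be where the real content lies; the $H_0$ and $H_2$ parts are then assembled routinely via Lemma \ref{lemma2}.
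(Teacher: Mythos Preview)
Your outline for $H_2$ and $H_0$ is essentially the paper's argument. Two small corrections there: the reduction on each piece of $H_0$ must go from points to arcs, not the other way; the paper obtains it by noting that $\{(x,J)\in H_0\times\J_{p,q}: x\approx_H e_1(J)\}$ has countable sections (since $\approx_{\J_{p,q}}$ is countable) and applying Lusin--Novikov to uniformize. Also, simple closed curves need no separate invariant piece: any such curve already contains an element of some $\J_{p,q}$ or $\mathcal I_{p,q}$, so it is swept up in the same cover; the footnote only flags a one-arc exception inside the proof of Proposition~\ref{proposition3}, not a missing case in the cover.

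The genuine gap is $H_1$. You diagnose it as the hard case and propose manufacturing a degree-$\le 2$ Borel graph on triods by finding triodic analogues of Propositions~\ref{proposition3} and~\ref{proposition4}. That program is unlikely to succeed as stated: a triodic arc component has no linear parametrization, so there is no canonical notion of ``consecutive'' triods, and nothing forces the degree to be bounded. More importantly, the effort is unnecessary. The paper disposes of $H_1$ in one line via Moore's Theorem: the plane can contain at most countably many pairwise disjoint simple triods, and since distinct triodic arc components contribute disjoint triods, there are only countably many $\approx_H$-classes in $H_1$. Hence $\approx_{H_1}$ is smooth (Borel reducible to equality on $\N$), and in particular essentially hyperfinite. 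You brush against this when you write that ``a component can contain only countably many pairwise disjoint triods,'' but the point is the global bound on all of $\R^2$, which immediately caps the number of components rather than the internal structure of each one.
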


\begin{proof}
Being hyperfinite, $\approx _{\J_{p,q}}$ is a countable Borel equivalence relation. Then the Borel set
$$\mathcal{M}_{p,q}=\{(x,J)\in H_0\times \J_{p,q}:x\approx _He_1(J)\}$$ has countable sections. So, the set $\pi(\mathcal{M}_{p,q})$ of all points in $H_0$ that are arc-connected to an arc in $\J_{p,q}$ is Borel, and $\mathcal{M}_{p,q}$ has a Borel uniformization by the Lusin-Novikov Theorem. This uniformization is clearly a Borel reduction from $\approx _{\pi  (\mathcal{M}_{p,q})}$ to $\approx _{\J_{p,q}}$. Then by Lemma \ref{lemma5}, $\approx _{\pi  (\mathcal{M}_{p,q})}$ is essentially hyperfinite. Similarly, $\approx _{\pi  (\mathcal{N}_{p,q})}$ is also essentially hyperfinite where $$\mathcal{N}_{p,q}=\{(x,J)\in H_0\times \mathcal{I}_{p,q}:x\approx _He_1(J)\}\ .$$

It is evident from their definitions that $\pi (\mathcal{M}_{p,q})$ and $\pi (\mathcal{N}_{p,q})$ are $\approx _H$-invariant. 

For any distinct elements $x,y\in H_0$ with $\pi (x)<\pi (y)$ connected by an arc $J$, we can choose rationals $r,s$ such that $\pi (x)<r<s<\pi  (y)$. Then it is obvious that $J$ contains an element of $\J_{r,s}$ so that $x,y\in \pi  (\mathcal{M}_{r,s})$. But if $\pi  (x)=\pi (y)$, we can similarly choose two horizontal lines $y=r$ and $y=s$ between the points $x$ and $y$ so that $x,y\in \pi  (\mathcal{N}_{r,s})$. These arguments show that sets of the form $\pi  (\mathcal{M}_{p,q})$ and $\pi  (\mathcal{N}_{p,q})$  cover all $H_0$.

Due to Moore's Theorem \cite{pittman1970}, $H_1$ has countably many $\approx _H$-classes, and so it is Borel reducible to the identity relation on $\N$. Lastly, $\approx _{H_2}$ is already the identity relation on $H_2$. Clearly, $\approx _{H_1}$ and $\approx _{H_2}$ are essentially hyperfinite. Then $$\{\pi (\mathcal{M}_{p,q}):p<q \text{ are rationals }\}\cup \{\pi (\mathcal{N}_{p,q}):p<q \text{ are rationals }\}\cup \{H_1,H_2\}$$ is a countable collection as in Lemma \ref{lemma2}. Thus, $\approx _H$ is essentially hyperfinite.
\end{proof}

\bibliographystyle{amsalpha}
\bibliography{refs}

\end{document}